\def\dj{d\kern-0.4em\char"16\kern-0.1em}
\def\Dj{\mbox{\raise0.3ex\hbox{-}\kern-0.4em D}}
\def\be{\begin{equation}}
\def\ee{\end{equation}}
\def\bena{\begin{eqnarray*}}
\def\ena{\end{eqnarray*}}
\def\t{\tau}
\def\s{\sigma}
\def\suml{\sum\limits}
\def\dss{\displaystyle}
 \def\E{\mathcal{E}}
 \def\Rd{\mathbf{R}^d}
 \def\Z{\mathbf{Z}_+}
\def\N{\mathbf{N}}
\def\lf {\lfloor}
\def\rf{\rfloor}
\numberwithin{equation}{section}
\newtheorem{te}{Theorem}[section]
\newtheorem{lema}{Lemma}[section]
\newtheorem{prop}{Proposition}[section]
\newtheorem{cor}{Corollary}[section]
\theoremstyle{definition}
\newtheorem{ex}{Example}[section]
\theoremstyle{remark}
\newtheorem{rem}{Remark}[section]
\title{\textbf{Extended Gevrey regularity via weighted matrices}}
\author{Nenad Teofanov}
\address{Department of Mathematics and Informatics,
University of Novi Sad, Novi Sad, Serbia}
\email{nenad.teofanov@dmi.uns.ac.rs}
\author{Filip Tomi\'c}
\address{Faculty of Technical Sciences,
University of Novi Sad, Novi Sad, Serbia}
\email{filip.tomic@uns.ac.rs}
\keywords{Ultradifferentiable functions;  defining sequences; weight matrices; Gevrey classes}
\subjclass[2000]{46F05, 46E10}
\begin{document}
\begin{abstract}
The main aim of this paper is to compare two recent approaches for investigating the interspace between the union of Gevrey spaces $\mathcal G_t (U)$ and the space of smooth functions $C^{\infty}(U)$. The first approach in the style of Komatsu is based on the properties of two parameter sequences $M_p=p^{\tau p^{\sigma}}$, $\tau>0$, $\sigma>1$. The other one uses weight matrices defined by certain weight functions. We prove the equivalence of the corresponding spaces in the Beurling case
by taking projective limits with respect to matrix parameters, while in the Roumieu case we need to consider a larger space then the one obtained as the inductive limit of extended Gevrey classes.
\end{abstract}

\maketitle

\par

\section{Introduction}

Classes of ultradifferentable functions (also known as smooth functions of ultra-rapid decay)
are usually studied in the framework of one of the two most widely used approaches.
The first one is based on the properties of the defining sequences $M_p$, $p\in \N$, which control the derivatives of the functions, \cite{Komatsuultra1}. For the same purpose, the other approach uses  weights with the certain asymptotical properties, \cite{BMT, BMT2}.
The relation between these weights and the so-called associated functions to  $M_p$ sequences provides a way to compare the theories of  ultradifferentable functions and their dual spaces of ultradistributions.  In many situations these approaches are equivalent. For example, it is  proved in \cite{BMT2}
that the corresponding classes of functions are equal
if the sequence $M_p$ satisfies Komatsu's conditions $(M.1)$, $(M.2)$ and $(M.3)$, see Section \ref{secPreliminaries}. These conditions are relaxed in \cite{Bonet} where $(M.3)$ is replaced by
\be
\label{NoviUslov}
(\exists Q\in \N)\quad\liminf_{p\to\infty}\frac{m_{Qp}}{m_p}>1,
\ee
with $\dss m_p=M_p/M_{p-1}$.

In this paper we study the equivalence of the approaches by considering  specific sequences which do not satisfy $(M.2)$. To that end we exploit the powerful technique based on weight matrices introduced in \cite{RS}. Broadly speaking, weight matrices are families of sequences. For instance, $\{p!^t\}_{t>0}$ is a weight matrix that consists of Gevrey sequences. More generally, for a given weight function $\omega$ (see Subsection \ref{secNotacija} for the definition)
one can observe matrices of the form $\dss {\mathcal M}=\{M_p^H\}_{H>0}$ where
\be
\label{MatrixW}
\dss M_p^H=e^{\frac{1}{H}\varphi^*(Hp)},\quad p\in \N,
\ee
and $\varphi^*$ is the Young conjugate of $\dss \varphi(t)=\omega(e^t)$, see \eqref{Young}.
This approach allows to prove that the corresponding classes of functions
are equivalent in certain situations even if $(M.2)$ is violated, see \cite{RS, Rainer}.

We consider the two parameter defining sequences of the form $M^{\t,\s}_p=p^{\t p^{\s}}$, $\t>0$, $\s>1$, cf. \cite{PTT-01}. Such sequences do not satisfy $(M.2)$ for any choice of parameters $\t>0$ and $\s>1$, hence the corresponding classes of functions $\E_{\t,\s}(U)$ (extended Gevrey classes) are not ultradifferentiable. However, related ultradifferentiable classes can be obtained by taking their unions and intersections (inductive and projective limits) with respect to the parameter $\t$ (this follows from Proposition \ref{OsobineKlasa} {\em iv)}).

Extended Gevrey regularity turned out to be convenient when describing certain aspects of some
hyperbolic PDE's. In particular, $\E_{1,2}(U)$ appears in the study of local solvability of strictly hyperbolic PDE's, for which the initial value problem is ill posed in the Gevrey settings (see \cite{CL}). In addition, sequences $M^{\t,\s}_p$ for  $1<\s\leq 2$ are recently used in \cite{Javier} to study the surjectivity of Borel maps for ultraholomorphic classes. For more details concerning $M^{\t,\s}_p$ and  $\E_{\t,\s}(U)$ we refer to \cite{PTT-01, PTT-04, PTT-05}.

In this paper we prove that the extended Gevrey classes are the special case of classes investigated \cite{Rainer, RS} only when considering projective and inductive limits with respect to the (matrix) parameter $\t$. More precisely, in the Beurling case we prove the equality of the corresponding spaces, while in the Roumieu case the equivalence holds when  the corresponding inductive limit is replaced by a larger space of test functions (see \eqref{OdnosSigma} and \eqref{GlavnaTeoremaJednakost}).

We start by proving that the function $T_{\t,\s,h}(k)$ associated to the sequence $M^{\t,\s}_p=p^{\t p^{\s}}$ is equivalent to a weight function in the sense of \cite{Bonet} (see Theorem \ref{GlavnaTeorema}).  For that purpose we need to estimate $T_{\t,\s,h}(k)$. This is done in \cite[Theorem 2.1]{PTT-04} by using the properties of the Lambert $W$ function. In Proposition \ref{PhiSigma} (see also Lemma \ref{OsobineAsocirane}) we use another  technique to obtain similar estimates.
Consequently, we conclude that  $\dss \{M^{\t,\s}_p\}_{\t>0}$ and $\dss \{e^{\frac{1}{H}\varphi_{\s}^*(Hp)}\}_{H>0}$ are equivalent matrices for a suitable function $\varphi_{\s}$, which implies that the corresponding classes of functions
given by these matrices coincide.

Although Theorem \ref{GlavnaTeorema}, as the main result of the paper, connects the approach from \cite{PTT-01, PTT-04, PTT-05} with the one given in \cite{RS, Rainer}, let us mention an important difference between them.
In contrast to the usual Carleman classes and corresponding part in \cite{RS, Rainer}, in the norm \eqref{Norma} we consider $h^{|\alpha|^\s}$, $\s>1$, in denominator. Thus the parameter $\s$ plays an important role in our construction which can not be revealed by using the techniques from
\cite{RS, Rainer}. For example, the spaces $\E_{\t,\s}(U)$ are closed under finite order differentiation for any choice of parameters $\t>0$ and $\s>1$.
In addition, the parameters $h$ and $\s$ provide a "fine tuning" in the gap between the union of Gevrey spaces and $C^{\infty}$ (see  Proposition \ref{OsobineKlasa} {\em i)}).


We end this introductory section with a review of some basic notions.

\subsection{Basic notions and notation}\label{secNotacija}

We will use the standat notation ${\bf N}$, $\Z$, ${\bf R}$,  ${\bf R}_+$, ${\bf C}$, for the sets of nonnegative
integers, positive integers, real numbers, positive real numbers and complex numbers, respectively.
The floor function of  $x\in {\bf R}_+$ is denoted by $\lf x \rf:=\max\{m\in \N\,:\,m\leq x\}$.
For a multi-index
$\alpha=(\alpha_1,\dots,\alpha_d)\in {\bf N}^d$ we write
$\partial^{\alpha}=\partial^{\alpha_1}\dots\partial^{\alpha_d}$ and
$|\alpha|=|\alpha_1|+\dots |\alpha_d|$. By $\# A$ we denote the number of elements of the finite set $A$. We write $\ln_+ x=\max\{0,\ln x\}$, $x>0$. 

An essential role in our analysis is played by the \emph{Lambert $W$ function}, which is defined as the inverse of $z e^{z}$, $z\in {\bf C}$. By $W(x)$, $x\geq 0$, we denote the restriction of its principal branch, and we review some of its basic properties as follows:
\begin{itemize}
\item[$(W1) \quad$] $W(0)=0$, $W(e)=1$, $W(x)$ is continuous, increasing and concave on $[0,\infty)$,
\vspace{0.2cm}
\item[$(W2) \quad$] $W(x e^{x})=x$ and $ x=W(x)e^{W(x)}$,  $x\geq 0$,
\vspace{0.2cm}
\item[$(W3) \quad $] $\displaystyle\ln x -\ln(\ln x)\leq W(x)\leq \ln x-\frac{1}{2}\ln (\ln x)$,  $ x\geq e$.
\end{itemize}

Note that $(W2)$ implies
\be
\label{PosledicaLambert1}
W(x\ln x)=\ln x,\quad x>1.
\ee
By using $(W3)$ we obtain
\be
\label{PosledicaLambert1.5}
W(x)\sim \ln x, \quad x\to \infty,
\ee
and therefore
\be
\label{PosledicaLambert2}
W(C x)\sim W(x),\quad x\to \infty,
\ee
for any $C>0$. We refer to \cite{LambF} for more details concerning the Lambert function.

Recall (see \cite{Bonet}), a non-negative, continuous, even and increasing function $\omega$ defined on $\mathbf R$, $\omega(0)=0$, is called \emph{weight function} if it satisfies the following conditions:
\begin{itemize}
\item[($\alpha$)] $\omega(2t)=O(\omega(t)),\quad t\to \infty,$
\vspace{0.1cm}
\item[($\beta$)] $ \omega(t)=O(t),\quad t\to\infty$
\vspace{0.1cm}
\item[($\gamma$)] $o(\omega(t))=\log t,\quad t \to \infty,$
\vspace{0.1cm}
\item [($\delta$)] $\varphi(t)=w(e^t),\quad {\rm is\, convex}.$
\end{itemize}

Young's conjugate of the function $\varphi$ (defined as above) is given by
\be
\label{Young}
 \dss \varphi^*(k)=\sup_{t>0}(kt - \varphi(t)),\quad k\geq 0.
\ee
Some classical examples of weight functions are
\be
\label{BMTexamples}
\omega (t) = \ln^{s}_+ |t|,\quad\quad \omega (t)=\frac{|t|}{\ln^{s-1} (e+|t|)},\quad s>1,\, t\in \mathbf R.
\ee
Moreover, $\omega (t)=|t|^s$ is a weight function if and only if $0<s\leq 1$. Note that  by \eqref{PosledicaLambert1.5} it follows that $\omega(t)=W(|t|)$ is not a weight functions since the condition $(\gamma)$ is not satisfied.

Functions $f$ and $g$ are equivalent if $f=O(g)$ and $g=O(f)$, and we will write $f\asymp g$. In particular, if $\omega$ is a weight function and $\omega_1\asymp\omega$ then
\be
\label{OcenaYoung}
A \varphi^*(y / A) \leq \varphi_1^*(y) \leq B\varphi^*(y/B)\quad y>0,
\ee
for some $A,B>0$, where $\varphi (t) = \omega(e^t)$, $\varphi_1 (t) = \omega_1 (e^t)$ and $\varphi^*$, $\varphi_1^*$ are their Young's conjugates, respectively (see \cite{BMT2}).

Throughout the paper we assume that $\t>0$ and $\s>1$, unless stated  otherwise.

\section{Preliminaries}
\label{secPreliminaries}

In this section we recall the definitions of weight functions, weight sequences and their associated functions, and classes of ultradifferentiable functions related to the extended Gevrey regularity. We also list their main properties that will be used in Section \ref{secGlavna}. We proceed with weight sequences introduced in \cite{PTT-01}.
\subsection{Weight sequences}
\label{secNizovi}

In this subsection we consider sequences of the form $M_p^{\tau,\s}=p^{\t p^\s}$,  $M^{\t,\s}_0=1$, $\tau>0$, $\s>1$.  Since $\dss (M^{\t,\s}_p)^{1/p}\to \infty$ when $p\to\infty$ such sequences are examples of weight sequences considered in \cite{Rainer}.

Note that

\be
\label{PrvaNejednakost}
M^{\t_1,\s_1}_{p}\leq M^{\t_2,\s_2}_p,\quad 0< \t_1\leq \t_2,\quad 1<\s_1\leq \s_2,\quad p\in \N.
\ee

Moreover, $M_p^{\tau,\s}=p^{\t p^\s}$, $\tau>0$, $\s>1$ ($M^{\t,\s}_0=1$),
satisfies the following conditions  (see \cite{PTT-01} for the proof):

\vspace{2mm}
$(M.1)$ $\dss ({M_p^{\t,\s}})^2\leq {M_{p-1}^{\t,\s}}{M_{p+1}^{\t,\s}}$,\,\,$p\in \Z$,\\
\vspace{1mm}

$\widetilde{(M.2)'}$  $(\exists C>0)\quad M_{p+1}^{\t,\s}\leq C^{p^{\s}}M_p^{\t,\s}$, $p\in \N$,\\

$\widetilde{(M.2)}$ $ (\exists C>0)$\quad $M_{p+q}^{\t,\s}\leq C^{p^{\s}+q^{\s}}M_p^{2^{\s-1}\t,\s}M_q^{2^{\s-1}\t,\s},\quad p,q\in \N,$ \\

$(M.3)'$ $  \displaystyle
\suml_{p=1}^{\infty}\frac{M_{p-1}^{\t,\s}}{M_{p}^{\t,\s}} <\infty,
$
\vspace{1mm}

$\widetilde{(M.4)}$ $\dss (\forall h>0)\, (\exists C>0)\quad  M^{\t_1,\s}_{p}\leq C\,h^{p^{\s}} M^{\t_2,\s}_p,\quad 0<\t_1<\t_2,\quad \s>1$,
\vspace{1mm}

$\widetilde{(M.5)}$ $\dss (\forall h>0)\, (\exists C>0)\quad M^{\t_1,\s_1}_{p}\leq C h^{p^{\s_2}} M^{\t_2,\s_2}_p,\,\, \t_1,\t_2>0,\, 1<\s_1<\s_2.$

Note that  $\widetilde{(M.4)}$ implies
$$C h^{p^{\s}} M^{\t,\s}_p\geq M^{\t/2,\s}_p \geq 1,\quad C, h>0,\quad p\in \N,$$ and hence we obtain weaker condition

\vspace{2mm}

$\widetilde{(M.4)’}$  $(\forall h>0)\, (\exists C>0)\quad{ h^{p^\s} M_p^{\t,\s}}\geq C   ,\quad p\in \N$.

\begin{rem}
Let us briefly comment the case $\s=1$. Then the conditions $\widetilde{(M.2)'}$ and $\widetilde{(M.2)}$ are classical Komtasu's  ${(M.2)'}$ and ${(M.2)}$ (respectively) for the Gevrey sequence $M_p=p!^\t$. Moreover, $\widetilde{(M.4)}$ also holds. The theory of Gevrey functions is a classical one (see \cite{KomatsuNotes, Rodino} and references therein), hence we are interested in the case $\s>1$.

Note that $\widetilde{(M.5)}$ is also true for the case $\s_2>\s_1=1$ (see \cite{PTT-01}).
\end{rem}

\vspace{1mm}
Recall (see \cite{Rainer}), a family of weight sequences ${\mathcal M}$ is called \emph{weight matrix} if
\be
\label{weightmatrix}
(\forall M_p, N_p \in{\mathcal M})\quad M_p\leq N_p\,\,\vee\,\,  N_p\leq M_p,\quad p\in \N.
\ee

\begin{ex}
For fixed $\s>1$ and $\s_2=\s_1=\s$, \eqref{PrvaNejednakost} implies that ${\mathcal M}_{\s}=\{M^{\t,\s}_p\}_{\t>0}$ is a weight matrix. Similarly,  ${\mathcal M}_{\t}=\{M^{\t,\s}_p\}_{\s>1}$ is a weight matrix for any given $\t > 0$.
Nevertheless, if we observe ${\mathcal M}=\{M^{\t,\s}_p\}_{\t>0,\s>1}$ then
for $\t_1>\t_2$ and $\s_1<\s_2$ we can only prove that
$$M^{\t_1,\s_1}_p\leq C M^{\t_2,\s_2}_p,\quad p\in \N,$$ for a large positive constant $C$ (see $\widetilde{(M.5)}$). Thus ${\mathcal M}=\{M^{\t,\s}_p\}_{\t>0,\s>1}$ does not satisfy \eqref{weightmatrix}.
\end{ex}

For two weight matrices ${\mathcal M}$ and ${\mathcal N}$ we write ${\mathcal M}\lesssim{\mathcal N}$ if
$$(\forall M_p\in {\mathcal M})\,(\exists N_p\in {\mathcal N})\,(\exists C>0) \quad  M_p\leq C^p N_p,\quad  p\in\N.$$
We say that ${\mathcal M}$ and ${\mathcal N}$ are \emph{equivalent} if ${\mathcal M}\lesssim{\mathcal N}$ and ${\mathcal N}\lesssim{\mathcal M}$  (see \cite{Rainer}).

\begin{rem}
\label{RemarkEkv}
Let $\omega$ be a weight function and $\omega_1$ equivalent to $\omega$. Notice that  $\dss \{M_p^H=e^{\frac{1}{H}\varphi^*(Hp)}\}_{H>0}$ and $\dss \{M_p^{H_1}=e^{\frac{1}{H_1}\varphi_1^*(H_1p)}\}_{H_1>0}$ are equivalent matrices due to
inequalities in \eqref{OcenaYoung}. In particular, when investigating matrices of the form \eqref{MatrixW} it is sufficient to consider functions that are equivalent to weights.
\end{rem}

Put
\be
\label{malomp}
\dss m^{\t,\s}_p=\frac{M_p^{\t,\s}}{M_{p-1}^{\t,\s}}, \qquad p\in \Z.
\ee
By $(M.1)$ it follows that $m^{\t,\s}_p$ is an increasing sequence.
Moreover, the following Lemma holds.

\begin{lema}
\label{Lemamp}
Let  $M_p^{\tau,\s}=p^{\t p^\s}$,  $M^{\t,\s}_0=1$, $\tau>0$, $\s>1$, and let $m^{\t,\s}_p$ be given by
\eqref{malomp}. Then the following inequalities hold
\be
\label{Nejednakost_mp}
\Big(\frac{e}{2^\s}\Big)^{\frac{\t p^{\s-1}}{2^{\s-1}}}p^{\frac{\t \s p^{\s-1}}{2^{\s-1}}}\leq m^{\t,\s}_p \leq e^{\t p^{\s-1}}p^{\t \s p^{\s-1}}, \quad p\geq 2.
\ee
\end{lema}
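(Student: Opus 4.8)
The plan is to pass to logarithms and read $\ln m^{\t,\s}_p$ as a single increment of a smooth function. Setting $f(x)=x^{\s}\ln x$ for $x\geq 1$, one has
\[
\ln m^{\t,\s}_p=\t\bigl(p^{\s}\ln p-(p-1)^{\s}\ln(p-1)\bigr)=\t\bigl(f(p)-f(p-1)\bigr),
\]
so by the mean value theorem there is $\xi\in(p-1,p)$ with $\ln m^{\t,\s}_p=\t\,f'(\xi)$, where $f'(x)=x^{\s-1}(\s\ln x+1)$. The whole statement then reduces to sandwiching $f'(\xi)$ between $f'(p-1)$ and $f'(p)$.

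First I would check that $f'$ is increasing on $[1,\infty)$: the factors $x^{\s-1}$ and $\s\ln x+1$ are nonnegative and nondecreasing there (the second is $\geq 1$ for $x\geq 1$), so their product is nondecreasing; equivalently $f''(x)=x^{\s-2}\bigl((\s-1)(\s\ln x+1)+\s\bigr)>0$ for $x\geq 1$ since $\s>1$. Hence, for $p\geq 2$ (so that $[p-1,p]\subset[1,\infty)$), we get $f'(p-1)\leq f'(\xi)\leq f'(p)$, i.e.
\[
(p-1)^{\s-1}\bigl(\s\ln(p-1)+1\bigr)\leq \tfrac1\t\ln m^{\t,\s}_p\leq p^{\s-1}\bigl(\s\ln p+1\bigr).
\]

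The upper inequality gives $\ln m^{\t,\s}_p\leq \t p^{\s-1}(1+\s\ln p)$, which exponentiates to the right-hand bound in \eqref{Nejednakost_mp}. For the lower inequality I would use that $p-1\geq p/2$ whenever $p\geq 2$: this yields at once $(p-1)^{\s-1}\geq (p/2)^{\s-1}=p^{\s-1}/2^{\s-1}$ and $\ln(p-1)\geq \ln p-\ln 2$, whence
\[
(p-1)^{\s-1}\bigl(\s\ln(p-1)+1\bigr)\geq \frac{p^{\s-1}}{2^{\s-1}}\bigl(1-\s\ln 2+\s\ln p\bigr).
\]
Multiplying by $\t$ and exponentiating turns the constant into $\bigl(e/2^{\s}\bigr)^{\t p^{\s-1}/2^{\s-1}}$, giving exactly the left-hand bound. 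I do not expect a real obstacle; the only point needing slight care is the lower estimate, where one should notice that the single inequality $p-1\geq p/2$ simultaneously controls the power factor and the logarithmic factor, and that $\s\ln(p-1)+1$ stays positive precisely because $p-1\geq 1$.
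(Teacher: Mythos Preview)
Your argument is correct and coincides with the paper's own proof: both set $f(x)=x^{\s}\ln x$, apply the mean value theorem to obtain $\ln m^{\t,\s}_p=\t f'(\xi)$ for some $\xi\in(p-1,p)$, sandwich $f'(\xi)$ between $f'(p-1)$ and $f'(p)$, and then use $p-1\geq p/2$ for $p\geq 2$ to extract the lower bound. The only cosmetic differences are that the paper writes the derivative in the form $\theta^{\s-1}\ln(e\theta^{\s})$ rather than $x^{\s-1}(\s\ln x+1)$, and that you make the monotonicity of $f'$ explicit (via $f''>0$) where the paper simply asserts the chain of inequalities.
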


\begin{proof}

Set $\dss f_{\t,\s}(x)=\t x^\s \ln x$, $x>0$. By the mean value theorem, for every $p\in \Z$ there exists $\theta_p$ such that

\be
\label{MeanValue}
f_{\t,\s}(p) -f_{\t,\s}(p-1)=\t \theta_p^{\s-1}\ln(e \theta_p^{\s}),\quad p-1<\theta_p <p.
\ee For $p\geq 2 \iff p/2\leq p-1$, we obtain

$$\frac{\t p^{\s-1}}{2^{\s-1}}\ln\frac{e p^{\s}}{2^{\s}}\leq \t (p-1)^{\s-1}\ln(e (p-1)^{\s})<\t \theta_p^{\s-1}\ln(e \theta_p^{\s}) <\t p^{\s-1}\ln(e p^{\s}),$$ and by \eqref{MeanValue} we conclude

$$\frac{\t p^{\s-1}}{2^{\s-1}}\ln\frac{e p^{\s}}{2^{\s}}\leq \t p^{\s }\ln p -\t (p-1)^\s\ln (p-1)\leq \t p^{\s-1}\ln(e p^{\s}),\quad p\geq 2. $$ Then \eqref{Nejednakost_mp} follows after taking exponentials.
\end{proof}

\begin{rem}
\label{remM2'}
Note that $\widetilde{(M.2)'}$ follows from the right-hand side of \eqref{Nejednakost_mp}. In particular,

$$M^{\t,\s}_p\leq  e^{\t p^{\s-1}}p^{\t \s p^{\s-1}} M^{\t,\s}_{p-1}\leq C^{p^{\s}} M^{\t,\s}_{p-1} ,\quad p\in \Z,$$ for suitable $C>0$.
\end{rem}

\subsection{Associated function}\label{secAsocirana}
\par
In this subsection we recall the definition and some elementary properties of $T_{\t,\s,h}(k)$, $h>0$,
the associated function to the sequence $M^{\t,\s}_p=p^{\t p^\s}$ given by
\be \label{asociranaProduzena}
\dss T_{\t,\s,h}(k)=\sup_{p\in \N}\ln_+\frac{h^{p^{\s}}k^{p}}{M_p^{\t,\s}}, \;\;\; k>0.
\ee
We refer to \cite{PTT-04} for more details on $T_{\t,\s,h}(k)$.
One of the aims of this paper is to prove that $\omega(k)=T_{\t,\s,h}(|k|)$ is equivalent to a weight function, see Theorem \ref{GlavnaTeorema} {\em i)}.

\begin{rem}
Consider $1<\s \leq 2$. Then by $\widetilde{(M.4)'}$ and Example 20 from \cite{Bonet} we obtain
$$T_{\t,\s,h}(k)\leq A\sup_{p\in \N}\ln_+\frac{k^p}{e^{p^\s}}+B\leq A_1\ln_+^{\frac{\s}{\s-1}} k +B_1 \,\quad k>0,$$ for suitable $A_1>0$ and $B_1\in \mathbf R$ (depending on $\t,\s,h$).  Hence we conclude that $\dss T_{\t,\s,h}(k)$ is dominated by a weight function (see \eqref{BMTexamples}). However this fact does not imply that $T_{\t,\s,h}(|k|)$ is equivalent to a weight function. We will provide additional arguments in the proof of Theorem \ref{GlavnaTeorema}.
\end{rem}

Sharp estimates for $ T_{\t,\s,h}(k)$ are given in \cite{PTT-04}, whenre it is proved that for some $A_1,A_2>0$ and $B_1, B_2\in \mathbf R$ (depending on $\t,\s,h$) the following estimates hold:
\begin{multline}
\label{nejednakostzaTeoremu1}
A_1  {W^{-\frac{1}{\s-1}}({{\mathfrak R}(h,k)})}\,{\ln_+}^{\frac{\s}{\s-1}}k +B_1 \leq
T_{\t,\s,h}(k)\\ \leq
 A_2 {W^{-\frac{1}{\s-1}}({{\mathfrak R}(h,k)})}\,{\ln_+}^{\frac{\s}{\s-1}}k +B_2,
\end{multline}
where
$$\dss {\mathfrak R}(h,k):=h^{-\frac{\s-1}{\t}}e^{\frac{\s-1}{\s}}\frac{\s-1}{\t \s}\ln(e+k),\quad h,k>0,$$ and $W$ is the principal branch of the Lambert function. Note that these estimates hold for any choice of parameters $h>0$, $\t>0$ and $\s>1$.

We write $T_{\t,\s}(k):=T_{\t,\s,1}(k)$ for the classical associated function associated to $M^{\t,\s}_p$ (see \cite{Komatsuultra1}).  We end this subsection with a simple result which will be used in the sequel.

\begin{lema}\label{OsobineAsocirane}
Let $T_{\t,\s,h}(k)$ be given by \eqref{asociranaProduzena}, and let $T_{\t,\s}(k):=T_{\t,\s,1}(k)$.
Then for any given  $h>0$ and $\t_2>\t>\t_1>0$ there exists $A,B\in {\mathbf R}$ such that
$$  T_{\t_2,\s}(k)+A \leq T_{\t,\s,h}(k)\leq  T_{\t_1,\s}(k)+B,\quad k>0.$$
\end{lema}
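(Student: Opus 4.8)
The plan is to reduce everything to elementary manipulations of the defining supremum \eqref{asociranaProduzena}, exploiting that the parameter $h$ enters only through the factor $h^{p^\s}$ in the numerator, while a change of $\t$ to $\t'$ rescales the denominator by $p^{(\t'-\t)p^\s}$. The key observation is that for $p$ large the term $p^{\varepsilon p^\s}$ (with $\varepsilon = \t_1 - \t > 0$ or $\varepsilon = \t - \t_2 > 0$) grows faster than $h^{p^\s}$ for \emph{any} fixed $h > 0$; this is exactly the content of $\widetilde{(M.4)}$, which I would invoke directly.

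First I would prove the left inequality $T_{\t_2,\s}(k) + A \le T_{\t,\s,h}(k)$. Write $T_{\t,\s,h}(k) = \sup_p \ln_+ \big( h^{p^\s} k^p / M_p^{\t,\s}\big)$ and $T_{\t_2,\s}(k) = \sup_p \ln_+ \big( k^p / M_p^{\t_2,\s}\big)$. For the comparison it suffices to compare the arguments of the suprema termwise: we want $c\, k^p / M_p^{\t_2,\s} \le h^{p^\s} k^p / M_p^{\t,\s}$ for some constant $c>0$ independent of $p$ and $k$, i.e. $c\, M_p^{\t,\s} \le h^{p^\s} M_p^{\t_2,\s}$. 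Since $\t < \t_2$, condition $\widetilde{(M.4)}$ applied with $\t_1 \rightsquigarrow \t$, $\t_2 \rightsquigarrow \t_2$ and the given $h$ gives exactly $M_p^{\t,\s} \le C h^{p^\s} M_p^{\t_2,\s}$ for all $p$, so $c = 1/C$ works. Taking $\ln_+$ is monotone, and $\ln_+(cx) \ge \ln_+(x) + \ln c$ when $\ln c \le 0$ (and trivially otherwise up to a bounded additive error), so passing to the supremum over $p$ yields $T_{\t,\s,h}(k) \ge T_{\t_2,\s}(k) + A$ with $A = \min(\ln c, 0)$ or a similarly explicit bounded constant.

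The right inequality $T_{\t,\s,h}(k) \le T_{\t_1,\s}(k) + B$ is symmetric. Now $\t_1 < \t$, so I would apply $\widetilde{(M.4)}$ with the roles $\t_1 \rightsquigarrow \t_1$, $\t_2 \rightsquigarrow \t$ to get $M_p^{\t_1,\s} \le C' h^{p^\s} M_p^{\t,\s}$, equivalently $h^{p^\s}/M_p^{\t,\s} \le C' / M_p^{\t_1,\s}$, hence $h^{p^\s} k^p / M_p^{\t,\s} \le C' k^p / M_p^{\t_1,\s}$ for all $p$ and $k$. Again $\ln_+$ is monotone and $\ln_+(C'x) \le \ln_+(x) + \ln_+ C'$, so taking the supremum over $p$ gives $T_{\t,\s,h}(k) \le T_{\t_1,\s}(k) + B$ with $B = \ln_+ C'$. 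This completes the chain of inequalities.

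I do not expect a genuine obstacle here; the only point requiring a small amount of care is the bookkeeping for how the multiplicative constants interact with $\ln_+$ (since $\ln_+$ is not additive, one must note that $\ln_+(\lambda x) \le \ln_+ \lambda + \ln_+ x$ and $\ln_+(\lambda x) \ge \ln_+ x - \ln_+(1/\lambda)$, both of which follow from $\ln_+$ being subadditive and monotone). The one substantive input is $\widetilde{(M.4)}$, which is listed in Section~\ref{secPreliminaries} and proved in \cite{PTT-01}; everything else is monotonicity of the supremum. It is worth remarking that the strict inequalities $\t_2 > \t > \t_1$ are what make $\widetilde{(M.4)}$ applicable — the statement would fail at the endpoints $\t_1 = \t$ or $\t = \t_2$ for large $h$ — so the hypotheses are used exactly where expected.
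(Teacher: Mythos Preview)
Your approach is exactly the paper's: apply $\widetilde{(M.4)}$ termwise to the quotients inside the supremum, then take $\ln_+$ and $\sup_p$. There is one small slip in the right-hand inequality. From $M_p^{\t_1,\s}\le C' h^{p^\s} M_p^{\t,\s}$ you cannot deduce $h^{p^\s}/M_p^{\t,\s}\le C'/M_p^{\t_1,\s}$; rearranging that inequality gives $1/M_p^{\t,\s}\le C' h^{p^\s}/M_p^{\t_1,\s}$, which has the factor $h^{p^\s}$ on the wrong side. The fix is immediate: since $\widetilde{(M.4)}$ holds \emph{for every} positive parameter, apply it with $1/h$ in place of $h$ to obtain $M_p^{\t_1,\s}\le C'(1/h)^{p^\s}M_p^{\t,\s}$, i.e.\ $h^{p^\s}M_p^{\t_1,\s}\le C' M_p^{\t,\s}$, which is precisely the termwise bound you need. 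With this correction your argument and the paper's coincide.
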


\begin{proof}
Note that $\widetilde{(M.4)}$ implies
$$C_2 \frac{k^p}{M^{\t_2,\s}_p}\leq \frac{\,h^{p^\s}k^p}{M^{\t,\s}_p}\leq C_1  \frac{ k^p}{ M^{\t_1,\s}_p},\quad k>0,\quad p\in \N, $$
and the conclusion follows after taking logarithms and the supremum with respect to $p\in \N$.
\end{proof}

\subsection{Extended Gevrey classes}\label{secKlase}

In this subsection we recall the definition of extended Gevrey classes and some of their basic properties. We consider general sequences which satisfy conditions $(M.1) - \widetilde{(M.5)}$.

Let $U$  be an open set in $\Rd$ and $K\subset \subset U$ be a regular compact set.  We denote by $\dss {\E}_{\t,\s,h}(K)$ the Banach space of functions $\phi \in  C^{\infty}(K)$ such that
\begin{equation} \label{Norma}
\| \phi \|_{{\E}_{\t,\s,h}(K)}=\sup_{\alpha \in \N^d}\sup_{x\in K}
\frac{|\partial^{\alpha} \phi (x)|}{h^{|\alpha|^{\s}}  M_{|\alpha|} ^{\t,\s} }<\infty.\,
\end{equation}
Note that
$$ \displaystyle
{\E}_{\t_1, \s_1, h_1}(K)\hookrightarrow {\E}_{\t_2,\s_2,h_2}(K), \;\;\;
0<h_1<h_2, \; 0<\t_1<\t_2, \; 1<\s_1<\s_2,
$$
where $\hookrightarrow$ denotes the strict and dense inclusion. We define spaces of Roumieu and Beurling type
by introducing the following inductive and projective limit topologies (respectively)
\begin{equation}
\label{NewClassesER} {\E}_{\{\t,\s\}}(U)=\varprojlim_{K\subset\subset U}\varinjlim_{h\to
\infty}{\E}_{\t,\s,h}(K),
\end{equation}

\begin{equation}
\label{NewClassesEB}   {\E}_{(\t,\s)}(U)=\varprojlim_{K\subset\subset U}\varprojlim_{h\to 0}{\E}_{\t,\s,h}(K).
\end{equation}

We omit the brackets if we consider either  $\{\t,\s\}$ or $(\t,\s)$.

\begin{rem}
The condition  $(M.3)'$ implies that ${\E}_{\t,\s}(U)$ contains compactly supported functions. The construction of smooth compactly supported functions which are not in Gevrey classes but which belong to ${\E}_{\t,\s}(U)$  can be found in \cite{PTT-01}.
\end{rem}

Extended Gevrey classes given by \eqref{NewClassesER} and \eqref{NewClassesEB} are studied in \cite{PTT-01, PTT-04, PTT-05}. For the convenience of the reader, we collect some of their basic properties in the following Proposition. Recall, the Gevrey class of index $t>1$ is given by ${\mathcal G}_t (U)=\E_{\{t,1\}}(U)$, see \eqref{NewClassesER}.

\begin{prop}
\label{OsobineKlasa}
Let $U$  be an open set in $\Rd$. Let ${\E}_{\{\t,\s\}}(U)$ and $ {\E}_{(\t,\s)}(U)$ be given by \eqref{NewClassesER} and \eqref{NewClassesEB} respectively, and let $\varinjlim$ and $\varprojlim$ denote the corresponding inductive and projective limits respectively.
Then  the following is true:
\begin{itemize}
\item[$i)$] For $\s_2>\s_1>1$ we have
\begin{multline}
\varinjlim_{t\to\infty}{\mathcal G}_t (U)\hookrightarrow\varprojlim_{\t\to 0}{\E}_{\{\t,\s_1\}}(U)=\varprojlim_{\t\to 0}{\E}_{(\t,\s_1)}(U)\hookrightarrow\varinjlim_{\t\to \infty}{\E}_{(\t,\s_1)}(U)\\
=\varinjlim_{\t\to \infty}{\E}_{\{\t,\s_1\}}(U)\hookrightarrow \varprojlim_{\t\to 0}{\E}_{\{\t,\s_2\}}(U)\hookrightarrow C^{\infty} (U).\nonumber
\end{multline}
\item[$ii)$] ${\E}_{\t,\s}(U) $ are closed under the pontwise multiplication.
\item[$iii)$] ${\E}_{\t,\s}(U) $ are closed under finite order derivation.
\item[$iv)$] For $a_{\alpha}\in {\E}_{(\t,\s)}(U) $ (resp. $a_{\alpha}\in {\E}_{\{\t,\s\}}(U) $) define
$$P(x,\partial)=\sum_{|\alpha|=0}^{\infty}a_{\alpha}(x)\partial^{\alpha},$$
such that for every $K\subset\subset U$ there exists $L>0$ and for every $h>0$ there exists $A>0$ (resp. for every $K\subset\subset U$ there exists $h>0$ and for every $L>0$ there exists $A>0$) so that
$$\sup_{x\in K}|\partial^{\beta}a_{\alpha}(x)|\leq A h^{|\beta|^{\s}}M^{\t,\s}_{|\beta|}\frac{L^{|\alpha|^\s}}{M^{2^{\s-1}\t,\s}_{|\alpha|}}.
$$
Then $\dss P(x,\partial)\,:\,{\E}_{\t,\s}(U)\to {\E}_{2^{\s-1}\t,\s}(U)  $ is a continuous and linear mapping.
\end{itemize}
\end{prop}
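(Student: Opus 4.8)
Parts (i)--(iii) are elementary consequences of the conditions of Subsection~\ref{secNizovi}, whereas (iv) is the substantial one; the strictness and density of the inclusions in (i), as well as full computational details, can be found in \cite{PTT-01, PTT-04, PTT-05}, and here I only indicate the mechanism. For (i), the containments that do not change the weight sequence are immediate: membership in $\E_{(\t,\s_1)}(U)$ for every small $\t$ forces membership in some such space, $\E_{(\t,\s)}(U)\subseteq\E_{\{\t,\s\}}(U)$ since the projective $h$--limit lies inside the inductive one, and $\E_{\{\t,\s_2\}}(U)\hookrightarrow C^\infty(U)$ is trivial. The two equalities and the two remaining proper inclusions rest on one device: by $\widetilde{(M.4)}$ one may trade the sequence parameter $\t$ against the parameter $h$ of the norm \eqref{Norma}; concretely, if $\|\phi\|_{\E_{\t',\s,h}(K)}<\infty$ with $0<\t'<\t$, then $\widetilde{(M.4)}$ yields $|\partial^\alpha\phi(x)|\le C\|\phi\|_{\E_{\t',\s,h}(K)}(gh)^{|\alpha|^{\s}}M^{\t,\s}_{|\alpha|}$ on $K$ for an arbitrary $g>0$, and letting $g\to0$ passes from the inductive to the projective $h$--scale, which gives the two equalities. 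The inclusions $\varinjlim_{t\to\infty}{\mathcal G}_t(U)\hookrightarrow\varprojlim_{\t\to0}\E_{\{\t,\s_1\}}(U)$ and $\varinjlim_{\t\to\infty}\E_{\{\t,\s_1\}}(U)\hookrightarrow\varprojlim_{\t\to0}\E_{\{\t,\s_2\}}(U)$ follow the same way from $\widetilde{(M.5)}$ (which also holds for $\s_1=1$, so that ${\mathcal G}_t(U)=\E_{\{t,1\}}(U)$ fits the scheme), the only addition being that a leftover monomial $h^{|\alpha|^{\s_1}}$ is dominated by $g^{|\alpha|^{\s_2}}$ for every $g>1$ because $\s_1<\s_2$; this subexponential absorption — a genuine exponential $h^{n}$ is $\le g^{n^{\s}}$ for every $g>1$ precisely because $\s>1$ — recurs below.

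For (ii) we apply the Leibniz rule to $\partial^\alpha(\phi\psi)$ and bound each summand by $\|\phi\|\,\|\psi\|\binom{\alpha}{\beta}h^{|\beta|^{\s}+|\alpha-\beta|^{\s}}M^{\t,\s}_{|\beta|}M^{\t,\s}_{|\alpha-\beta|}$ in the respective norms; log--convexity $(M.1)$ gives $M^{\t,\s}_{|\beta|}M^{\t,\s}_{|\alpha-\beta|}\le M^{\t,\s}_{|\alpha|}$, the exponent satisfies $2^{1-\s}|\alpha|^{\s}\le|\beta|^{\s}+|\alpha-\beta|^{\s}\le|\alpha|^{\s}$, and $\sum_{\beta\le\alpha}\binom{\alpha}{\beta}\le 2^{|\alpha|}$ is absorbed as above, so choosing the target parameter appropriately (enlarging $h$ in the Roumieu case, shrinking it in the Beurling case) yields $\phi\psi\in\E_{\t,\s}(U)$. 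For (iii) it is enough to treat a single derivative: $|\partial^\alpha\partial_j\phi|=|\partial^{\alpha+e_j}\phi|$, and by Remark~\ref{remM2'} (that is, $\widetilde{(M.2)'}$) one has $h^{(|\alpha|+1)^{\s}}M^{\t,\s}_{|\alpha|+1}\le C^{(|\alpha|+1)^{\s}}h^{(|\alpha|+1)^{\s}}M^{\t,\s}_{|\alpha|}$; since $(|\alpha|+1)^{\s}-|\alpha|^{\s}=O(|\alpha|^{\s-1})=o(|\alpha|^{\s})$, the extra powers of $h$ and $C$ are subexponential in $|\alpha|^{\s}$ and get absorbed, so $\partial_j\phi\in\E_{\t,\s}(U)$ and one iterates. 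It is exactly $h^{|\alpha|^{\s}}$ in the denominator of \eqref{Norma} together with the growth $C^{p^{\s}}$ in $\widetilde{(M.2)'}$ that makes (iii) work although $(M.2)'$ fails.

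The heart of the proof is (iv). The plan is to differentiate term by term,
\[
\partial^\gamma\big(P(x,\partial)\phi\big)=\sum_{\alpha}\sum_{\beta\le\gamma}\binom{\gamma}{\beta}\,\partial^\beta a_\alpha(x)\,\partial^{\gamma-\beta+\alpha}\phi(x),
\]
to insert the hypothesis on $\partial^\beta a_\alpha$ together with $|\partial^{\gamma-\beta+\alpha}\phi|\le\|\phi\|_{\E_{\t,\s,h'}(K)}(h')^{|\gamma-\beta+\alpha|^{\s}}M^{\t,\s}_{|\gamma-\beta+\alpha|}$, and then to run the decisive cancellation: $\widetilde{(M.2)}$ expands $M^{\t,\s}_{|\gamma-\beta+\alpha|}\le C^{|\gamma-\beta|^{\s}+|\alpha|^{\s}}M^{2^{\s-1}\t,\s}_{|\gamma-\beta|}M^{2^{\s-1}\t,\s}_{|\alpha|}$, so that the factor $M^{2^{\s-1}\t,\s}_{|\alpha|}$ annihilates the denominator $M^{2^{\s-1}\t,\s}_{|\alpha|}$ coming from the coefficient estimate and leaves only the power $L^{|\alpha|^{\s}}$ in $\alpha$. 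By \eqref{PrvaNejednakost} and $(M.1)$ one bounds $M^{\t,\s}_{|\beta|}M^{2^{\s-1}\t,\s}_{|\gamma-\beta|}\le M^{2^{\s-1}\t,\s}_{|\gamma|}$, which is the sequence of the target class $\E_{2^{\s-1}\t,\s}(U)$, while the convexity estimate $(a+b)^{\s}\le 2^{\s-1}(a^{\s}+b^{\s})$ and superadditivity split every exponent of $h,h',L,C$ into a $|\gamma|^{\s}$--part (to be collected into the norm of $P(x,\partial)\phi$) and an $|\alpha|^{\s}$--part. The sum over $\beta\le\gamma$ costs only $\le 2^{|\gamma|}$, again absorbed subexponentially, and the sum over $\alpha\in\N^d$ of the remaining base raised to the power $|\alpha|^{\s}$ converges once the free small parameter — $L$ in the Roumieu reading, where it ranges over all positive numbers, and $h'$ in the Beurling reading — is taken so that this base is $<1$, since $\sum_{n\ge0}(n+1)^{d-1}\rho^{n^{\s}}<\infty$ for $\rho<1$. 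Running this with $\gamma$ arbitrary shows that the series for $P(x,\partial)\phi$ and for all its term-by-term derivatives converge uniformly on compact sets, so $P(x,\partial)\phi\in C^\infty(U)$ with the displayed formula valid, and the resulting estimate $\|P(x,\partial)\phi\|_{\E_{2^{\s-1}\t,\s,h''}(K)}\le C_K\|\phi\|_{\E_{\t,\s,h'}(K)}$ for a suitable $h''$ is precisely linearity and continuity of $P(x,\partial)\colon\E_{\t,\s}(U)\to\E_{2^{\s-1}\t,\s}(U)$.

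I expect (iv) to be the sole genuine obstacle, and the difficulty there is entirely bookkeeping: every one of the parameters $h,h',L,C$ and every combinatorial factor must be routed into either the $|\gamma|^{\s}$ channel, where it is absorbed into the norm of the image, or the $|\alpha|^{\s}$ channel, where the base must stay $<1$, and this has to be done without ever destroying the summability margin. The elementary inequalities $a^{\s}+b^{\s}\le(a+b)^{\s}\le 2^{\s-1}(a^{\s}+b^{\s})$ and $n^{d}\le g^{n^{\s}}$ — the latter with the feature, available only for $\s>1$, that even full exponentials $h^{n}$ satisfy $h^{n}\le g^{n^{\s}}$ for every $g>1$ — are exactly what keep the two channels simultaneously under control.
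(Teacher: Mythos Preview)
The paper does not prove Proposition~\ref{OsobineKlasa}: it is stated as a summary of known facts with references to \cite{PTT-01, PTT-04, PTT-05}, so there is no in-paper argument to compare against. Your sketch supplies exactly the mechanisms those references use --- $\widetilde{(M.4)}$ and $\widetilde{(M.5)}$ for the chain in (i), Leibniz plus $(M.1)$ for (ii), $\widetilde{(M.2)'}$ for (iii), and the decisive cancellation via $\widetilde{(M.2)}$ in (iv) --- and your identification of (iv) as the substantial part, with the $M^{2^{\s-1}\t,\s}_{|\alpha|}$ factor from $\widetilde{(M.2)}$ annihilating the denominator in the coefficient bound, is precisely the point.

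One small imprecision in your treatment of (iii): the factor $C^{|\alpha|^{\s}}$ (you write $C^{(|\alpha|+1)^{\s}}$, but $\widetilde{(M.2)'}$ gives $C^{|\alpha|^{\s}}$) coming from $\widetilde{(M.2)'}$ is \emph{not} subexponential in $|\alpha|^{\s}$ --- it lives exactly at that scale. It is absorbed not by the $o(|\alpha|^{\s})$ argument you invoke but simply by replacing $h$ by $Ch$ in the Roumieu norm (respectively, starting from $h/C$ in the Beurling case). Only the drift $h^{(|\alpha|+1)^{\s}-|\alpha|^{\s}}$ is genuinely subexponential. This does not affect the conclusion, but the distinction matters because the same pattern recurs in (iv): factors at the $|\alpha|^{\s}$ or $|\gamma|^{\s}$ scale are handled by adjusting the norm parameter, while only strictly lower-order combinatorial debris (such as $2^{|\gamma|}$) is disposed of by the ``$g^{n^{\s}}$ swallows $h^{n}$'' trick.
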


Let $h=1$ in \eqref{Norma}. We introduce the following spaces
\be
\label{Esigma}
\E_{\{\s\}}(U)=\varinjlim_{\t\to \infty}\E_{\t,\s}(U),\quad \E_{(\s)}(U)=\varprojlim_{\t\to 0} \E_{\t,\s}(U),
\ee

\be
\label{EsigmaTilde}
\E_{\{\s\}} ^R (U)= \varprojlim_{K\subset\subset U}\varinjlim_{\t\to
\infty}{\E}_{\t,\s,1}(K),\quad \E_{(\s)} ^B(U)=\varprojlim_{K\subset\subset U}\varprojlim_{\t\to
0}{\E}_{\t,\s,1}(K).
\ee
Note that Proposition \ref{OsobineKlasa} {\em i)}, and the order of quantifiers in the definition of spaces \eqref{Esigma} and \eqref{EsigmaTilde} implies that
\be
\label{OdnosSigma}
\varinjlim_{t\to\infty}{\mathcal G}_t (U)\hookrightarrow \E_{(\s)}(U)=\E_{(\s)} ^B(U)\hookrightarrow \E_{\{\s\}}(U)\hookrightarrow\E_{\{\s\}} ^R(U)\hookrightarrow C^{\infty}(U).
\ee
Notice that, unlike $ \E_{\t,\s}(U) $,  $\E_{\{\s\}}(U)$, $ \E_{(\s)}(U) $, are classes of ultradifferentiable functions. This follows from Proposition \ref{OsobineKlasa} {\em iv)}. Moreover, ultradifferentiability of $\E_{\{\s\}} ^R(U)$ follows from the arguments given in \cite{RS}.

\section{Main result}\label{secGlavna}

In this section we first give an estimate for $T_{\t,\s}(k)=T_{\t,\s,1}(k)$ which is introduced in Subsection \ref{secAsocirana}. Note that $T_{\t,\s}(k)$ satisfies estimates \eqref{nejednakostzaTeoremu1} when $h=1$. Therefore the following Proposition follows directly from \cite[Theorem 2.1]{PTT-04}. However, here we give an independent proof.

\begin{prop}
\label{PhiSigma}
Let $T_{\t,\s}(k)=T_{\t,\s,1}(k)$ be given by \eqref{asociranaProduzena} with $h=1$, and let
$W(t)$, $t> 0$, denote the restriction of the principal branch of the Lambert $W$ function.
If
$\dss\varphi_{\s}(t)=\frac{t^{\frac{\s}{\s-1}} }{W ^{\frac{1}{\s-1}}(t)}$, $t>0$, and $\varphi_{\sigma}(0)=0$, then we have
$$
B_{\t,\s}\varphi_{\s}(\ln_+ k)+\widetilde{B}_{\t,\s}  \leq T_{\t,\s}(k)\leq A_{\t,\s}\varphi_{\s}(\ln_+ k)+\widetilde{A}_{\t,\s},\quad k>0,
$$
for suitable constants $A_{\t,\s},B_{\t,\s}>0$ and $\widetilde{A}_{\t,\s},\widetilde{B}_{\t,\s}\in \mathbf R$.
\end{prop}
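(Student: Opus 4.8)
The plan is to estimate $T_{\t,\s}(k)=\sup_{p\in\N}\ln_+\dfrac{k^p}{p^{\t p^\s}}$ directly, by treating $p$ as a continuous variable and optimizing. Writing $L=\ln_+ k$, the quantity inside the supremum is $g(p)=pL-\t p^{\s+1}\ln p$ (for $k>1$; the case $k\le 1$ gives $T_{\t,\s}(k)=0$ and is trivial, absorbed into the additive constants). I would first compute $g'(p)=L-\t(\s+1)p^\s\ln p-\t p^\s$ and observe that the maximizing $p_0$ satisfies, up to bounded multiplicative factors, $p_0^\s\ln p_0 \asymp L/\t$, equivalently $p_0 \asymp (L/(\t\ln p_0))^{1/\s}$. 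Solving this asymptotically: from $p_0^\s\ln p_0\asymp L$ one gets $\s\ln p_0 + \ln\ln p_0 \asymp \ln L$, so $\ln p_0 \asymp \frac1\s\ln L$; plugging back, $p_0 \asymp (L/\ln L)^{1/\s}$ up to constants. The key identification is that $\ln L \asymp W(L)$ (this is exactly \eqref{PosledicaLambert1.5}, and more precisely $(W3)$ gives two-sided control), so $p_0 \asymp (L/W(L))^{1/\s}$.

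Next I would substitute $p_0$ back into $g$: the dominant term is $p_0 L \asymp L\cdot (L/W(L))^{1/\s} = L^{1+1/\s}/W^{1/\s}(L) = L^{\s/(\s-1)}\cdot(\text{?})$ — wait, $1+1/\s=(\s+1)/\s$, not $\s/(\s-1)$, so here I must be careful: the sequence is $p^{\t p^\s}$ with exponent $\t p^\s$, so $\ln M_p = \t p^\s\ln p$, and $g(p)=pL-\t p^\s\ln p$. Redoing: $g'(p)=L-\t\s p^{\s-1}\ln p-\t p^{\s-1}=L-\t p^{\s-1}(\s\ln p+1)$, so $p_0^{\s-1}\ln p_0 \asymp L/\t$. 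Then $\ln p_0 \asymp \frac{1}{\s-1}\ln L$, hence $p_0 \asymp (L/\ln p_0)^{1/(\s-1)} \asymp (L/W(L))^{1/(\s-1)}$ using $\ln p_0 \asymp \frac{1}{\s-1}\ln L \asymp W(L)$ up to constants. The main term of $g(p_0)$ is then $p_0 L \asymp L\cdot (L/W(L))^{1/(\s-1)} = L^{\s/(\s-1)}/W^{1/(\s-1)}(L) = \varphi_\s(L)$, which is exactly the claimed shape. The term $\t p_0^\s\ln p_0$ is of the same order (a fixed fraction of $p_0 L$), so it only changes the multiplicative constant, giving $g(p_0)\asymp \varphi_\s(\ln_+ k)$.

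To make this rigorous rather than heuristic, I would avoid calculus on the non-integer optimizer and instead proceed by two matching bounds. For the \emph{upper} bound: for every $p\in\N$, $\ln_+\frac{k^p}{M_p^{\t,\s}}\le pL$, and also $\le pL - \t p^\s \ln p$ when the latter is positive; I choose a specific admissible integer $p_1=\lfloor c(L/W(L))^{1/(\s-1)}\rfloor$ with $c$ small and bound the supremum by splitting into $p\le p_1$ and $p>p_1$, using on the second range that $\t p^\s\ln p$ grows fast enough (using Lemma \ref{OsobineAsocirane} / monotonicity of $m_p^{\t,\s}$ from Lemma \ref{Lemamp}, or convexity from $(M.1)$) that the expression is already past its max. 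For the \emph{lower} bound: plug the single admissible value $p=p_1$ (or $p_1$ with $c$ chosen appropriately) into the supremum and check $p_1 L - \t p_1^\s\ln p_1 \ge B_{\t,\s}\varphi_\s(L)+\widetilde B_{\t,\s}$, using $(W3)$ to replace $W(L)$ by $\ln L$ within constants and the floor function estimate $p_1 \ge \frac12 c(L/W(L))^{1/(\s-1)}$ for $L$ large (small $L$ handled by the additive constant $\widetilde B_{\t,\s}$).

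The main obstacle I anticipate is the bookkeeping in passing between $\ln p_0$, $\ln L$, and $W(L)$: the relation $p_0^{\s-1}\ln p_0\asymp L$ is implicit, and one needs $(W3)$ together with \eqref{PosledicaLambert1} ($W(x\ln x)=\ln x$) to pin down $p_0^{\s-1}\asymp L/W(L)$ with explicit constants on both sides, then verify that the floor function and the lower-order term $\t p^\s\ln p$ do not erode the two-sided bound. A secondary nuisance is that $\varphi_\s(L)=L^{\s/(\s-1)}W^{-1/(\s-1)}(L)$ must be compared to the slightly different exact expression $p_0 L$; since $W(CL)\asymp W(L)$ by \eqref{PosledicaLambert2}, such constant distortions are harmless, but they must be tracked to land the statement in the stated form with the constants $A_{\t,\s},B_{\t,\s}$ depending on $\t,\s$ only. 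Everything else — monotonicity of the relevant functions, positivity of $\ln_+$, the reduction $k\le 1$ — is routine.
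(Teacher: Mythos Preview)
Your approach is correct and genuinely different from the paper's. The paper does \emph{not} optimize the supremum directly. Instead it uses the integral representation
\[
T_{\t,\s}(k)=\int_1^k \frac{m_{\t,\s}(\lambda)}{\lambda}\,d\lambda,
\]
valid by $(M.1)$, where $m_{\t,\s}(\lambda)=\#\{p:m_p^{\t,\s}\le\lambda\}$ is the counting function of the quotients $m_p^{\t,\s}=M_p^{\t,\s}/M_{p-1}^{\t,\s}$. Lemma~\ref{Lemamp} squeezes $m_p^{\t,\s}$ between two expressions of the form $C^{p^{\s-1}}p^{\t' p^{\s-1}}$; the inequality $C^{p^{\s-1}}p^{\t p^{\s-1}}\le\lambda$ is then solved \emph{exactly} for $p$ via \eqref{PosledicaLambert1}, giving the counting function in closed form involving $W$, and the remaining integral $\int e^{\frac{1}{\s-1}W(t)}\,dt$ is evaluated explicitly by the substitution $s=W(t)$. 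Both routes in fact deliver the sharper form $T_{\t,\s}(k)\asymp \t^{-1/(\s-1)}\varphi_\s(\ln_+ k)$ with multiplicative constants depending only on $\s$ (recorded in the paper as \eqref{KonacnaocenaAsocirana} and used in Theorem~\ref{GlavnaTeorema}\,\emph{ii)} to set up the bijection $\t\leftrightarrow H$). The paper's argument is more systematic in that the Lambert function enters through an exact inversion rather than through the asymptotic $(W3)$; yours is more elementary, avoids the Komatsu integral machinery and Lemma~\ref{Lemamp}, and is closer in spirit to the proof of \cite[Theorem~2.1]{PTT-04} that the paper explicitly set out to bypass.

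One correction to your rigorous upper bound: the splitting into $p\le p_1$ and $p>p_1$ with ``$c$ small'' is off as stated, since small $c$ places $p_1$ \emph{below} the continuous maximizer, so the range $p>p_1$ still contains the peak. The clean fix is to observe that $p\mapsto \t p^\s\ln p$ is convex on $[1,\infty)$ (its second derivative is $p^{\s-2}(\s(\s-1)\ln p+2\s-1)>0$ there), so $g(p)=pL-\t p^\s\ln p$ is concave on $[1,\infty)$; hence the discrete supremum is bounded above by the continuous supremum $g(p_0)$, and your computation of $g(p_0)\asymp\varphi_\s(L)$ (made precise by applying \eqref{PosledicaLambert1} to $p_0^{\s-1}\ln p_0^{\s-1}\asymp L$ together with \eqref{PosledicaLambert2}) already gives the upper bound without any case split.
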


\begin{proof}
For $\lambda>0$ we let
$$\dss m_{\t,\s}(\lambda)=\#\{p\in \Z\,|\, m^{\t,\s}_{p}\leq \lambda\},$$
and note that $m_{\t,\s}(\lambda)=0$ for all $0<\lambda< 1$. This is due to the fact that $m_1=1$ and $m^{\t,\s}_p$ is increasing.

Since $M^{\t,\s}_p$ satisfies $(M.1)$ we can write (see \cite{Komatsuultra1, PilipovicKnjiga})

$$T_{\t,\s}(k)=\int_{0}^k \frac{m_{\t,\s}(\lambda)}{\lambda}d\lambda=\int_{1}^k \frac{m_{\t,\s}(\lambda)}{\lambda}d\lambda.$$  In the sequel we estimate $m_{\t,\s}(\lambda)$ when $\lambda \geq 1$.

Put
$$m^C_{\t,\s}(\lambda)=\#\{p\in \Z\,|\, C^{p^{\s-1}} p^{\t p^{\s-1}}\leq \lambda\}, \quad C>0.$$ Then \eqref{Nejednakost_mp} implies that
\be
\label{OcenaMCP}
m^{C_1}_{(\t \s),\s}(\lambda) \leq m_{\t,\s}(\lambda)\leq m^{C_2}_{(\t\s)/2^{\s-1},\s}(\lambda) ,\quad \lambda\geq 1,\nonumber
\ee where $C_1=e^{\t}$ and $C_2=(e/2^{\s})^{\t/2^{\s-1}}$. In particular,

\be
\label{OcenaTIntegral}
\int_{1}^k \frac{ m^{C_1}_{(\t \s),\s}(\lambda)   }{\lambda}d\lambda\leq T_{\t,\s}(k)\leq\int_{1}^k \frac{ m^{C_2}_{(\t\s)/2^{\s-1},\s}(\lambda)}{\lambda}d\lambda, \quad k>0.
\ee

Next we note that

\begin{multline} C^{p^{\s-1}} p^{\t  p^{\s-1}}\leq \lambda \iff C^{\frac{\s-1}{\t}} p^{\s-1} \ln (C^{\frac{\s-1}{\t}} p^{\s-1})\leq C^{\frac{\s-1}{\t}} \frac{\s-1}{\t}\ln\lambda\\
\iff \ln(C^{\frac{\s-1}{\t}} p^{\s-1}) \leq W( C^{\frac{\s-1}{\t}} \frac{\s-1}{\t}\ln\lambda)\iff\\
 p\leq C^{-\frac{1}{\t}}{ e}^{\frac{1}{\s-1}W( C^{\frac{\s-1}{\t}} \frac{\s-1}{\t}\ln\lambda)},\quad C>0,\,\, \lambda\geq 1,\nonumber
\end{multline} where for the second equivalence we used property \eqref{PosledicaLambert1} of the Lambert function.

This calculation shows that
\begin{equation}
\label{ocenaMCPrevisited}
m^C_{\t,\s}(\lambda)=\Big\lfloor C^{-\frac{1}{\t}}{ e}^{\frac{1}{\s-1}W( C^{\frac{\s-1}{\t}} \frac{\s-1}{\t}\ln\lambda)}\Big\rfloor, \quad \lambda\geq 1.\nonumber
\end{equation} and therefore

$$\int_{1}^k \frac{m^C_{\t,\s}(\lambda) }{\lambda}d\lambda \sim C^{-\frac{1}{\t}} \int_{1}^k \frac{{e}^{\frac{1}{\s-1}W( C^{\frac{\s-1}{\t}} \frac{\s-1}{\t}\ln\lambda)}}{\lambda} d\lambda,\,\quad k\to \infty. $$ It remains to compute

$$ I^C_{\t,\s}(k):= C^{-\frac{1}{\t}} \int_{1}^k \frac{{e}^{\frac{1}{\s-1}W( C^{\frac{\s-1}{\t}} \frac{\s-1}{\t}\ln\lambda)}}{\lambda} d\lambda, \quad C>0.$$

Set $C_{\t,\s}= C^{\frac{\s-1}{\t}} \frac{\s-1}{\t}$. Note that after the substitution $t=C_{\t,\s}\ln \lambda$ we obtain
\be
\label{ICocena1}
  I^C_{\t,\s}(k)=C^{-\frac{\s}{\t}}\frac{\t}{\s-1}\int_{0}^{C_{\t,\s} \ln k} {e}^{\frac{1}{\s-1}W(t)} dt.
\ee
Another change of variables $W(t)=s $ $( t=s e^s, dt =(s+1)e^s ds)$, and integration by parts yields
\be
\label{ICocena2}
\int {e}^{\frac{1}{\s-1}W(t)}dt =\int  {e}^{\frac{\s s}{\s-1}} (s+1) ds ={e}^{\frac{\s s}{\s-1}}\frac{\s-1}{\s}\Big(s +\frac{1}{\s}\Big),
\ee where we use indefinite integral just for the notational convenience.

Now using $(W2)$ property of the Lambert function and \eqref{PosledicaLambert2},  by \eqref{ICocena1} and \eqref{ICocena2} we have

\begin{multline}
\label{ocenaICP}
 I^C_{\t,\s}(k)=\frac{\t}{\s} C^{-\frac{\s}{\t}} {e}^{\frac{\s s}{\s-1}} \Big(s +\frac{1}{\s}\Big)\Big|_{0}^{W(t)}=\\
\frac{\t}{\s} C^{-\frac{\s}{\t}} \Big(\frac{ t}{W (t)}\Big)^{\frac{\s}{\s-1}}  (W (t)+\frac{1}{\s})\Big|_{0}^{C_{\t,\s}\ln k}  \asymp \t^{-\frac{1}{\s-1}}\varphi_{\s}(\ln_+  k) + \widetilde{C}_{\t,\s} ,\  k>0,
\end{multline} for some $\widetilde{C}_{\t,\s}\in \mathbf R$, where the hidden constants are depending only on $\s$.

More precisely, using \eqref{OcenaTIntegral} and \eqref{ocenaICP} we conclude

\be
\label{KonacnaocenaAsocirana}
 B_{\s}\t^{-\frac{1}{\s-1}}\varphi_{\s}(\ln_+  k)+\widetilde{B}_{\t,\s} \leq T_{\t,\s}(k)\leq A_{\sigma}\Big(\frac{\t}{2^{\s-1}}\Big)^{-\frac{1}{\s-1}}\varphi_{\s}(\ln_+  k)+\widetilde{A}_{\t,\s},\,\,k>0,
\ee for suitable $A_{\s}, B_{\s}>0$ and $\widetilde{A}_{\t,\s},\widetilde{B}_{\t,\s}\in \mathbf R$.
This completes the proof.
\end{proof}

Following \cite{Rainer, RS} we introduce the Banach space ${\mathcal B}_{H,\s}(K)$, $K\subset\subset U$, with the norm

$$\|\phi \|_{{\mathcal B}_{H,\s}(K)}=\sup_{\alpha \in \N^d}\sup_{x\in K}
{|\partial^{\alpha} \phi (x)|}e^{-\frac{1}{H}\varphi_{\sigma}^{*}(H p)},\quad H>0,$$ where $\varphi^*_{\s}$ is Youngs conjugate of the function $\dss \varphi_\s$ introduced in Proposition \ref{PhiSigma}.

We introduce the corresponding Roumieu and Beurling classes as follows:
$$ {\mathcal B}_{\{\s\}}(U)=\varprojlim_{K\subset\subset U}\varinjlim_{H\to\infty} {\mathcal B}_{H,\s}(U), \quad {\mathcal B}_{(\s)}(U)=\varprojlim_{K\subset\subset U}\varprojlim_{H\to 0} {\mathcal B}_{H,\s}(U), $$
respectively.

Now we can formulate the main result of the paper.

\begin{te} \label{GlavnaTeorema} Fix $\s>1$ and let $\dss\varphi_{\s}(t)$ be as in Proposition \ref{PhiSigma}. Moreover, let $T_{\t,\s,h}$ be given by \eqref{asociranaProduzena}. Then the following is true.
\begin{itemize}
\item[i)] The function $\omega(k)=\varphi_{\s}(\ln_+|k|)$ is  equivalent to a weight function. Moreover, for every $h>0$ and  $\t>0$, the function $\omega(k)=T_{\t,\s,h}(|k|)$ is equivalent to a weight function.
\item[ii)] The weight matrices ${\mathcal M}_{\s}=\{p^{\t p^{\s}} \}_{\t>0}$ and ${\mathcal N}_{\s}=\{\dss e^{\frac{1}{H}\varphi_{\sigma}^{*}(H p)}\}_{H>0}$ are equivalent. In particular,
\be
\label{GlavnaTeoremaJednakost}
{\mathcal B}_{(\s)}(U)= {\mathcal E}_{(\s)} ^B (U),\quad {\mathcal B}_{\{\s\}}(U)=
{\mathcal E}_{\{\s\}} ^R (U)
\ee
where $\dss {\mathcal E}_{(\s)} ^B (U)$ and ${\mathcal E}_{\{\s\}} ^R (U)$ are given in \eqref{EsigmaTilde}.
\end{itemize}
\end{te}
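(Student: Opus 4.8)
The plan is to prove the two items of Theorem \ref{GlavnaTeorema} in order, using Proposition \ref{PhiSigma} as the bridge between the associated function $T_{\t,\s}$ and the weight-matrix description.

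\textbf{Part i).} First I would show that $\omega(k)=\varphi_{\s}(\ln_+|k|)$ is equivalent to a weight function by exhibiting an explicit one. The natural candidate, suggested by \eqref{PosledicaLambert1.5}, is to replace $W(t)$ by $\ln t$: set $\widetilde\omega(k)=\dss\frac{(\ln_+|k|)^{\s/(\s-1)}}{(\ln_+\ln_+|k|)^{1/(\s-1)}}$ for $|k|$ large (suitably modified near the origin to be continuous, even, increasing, vanishing at $0$), which is a weight function of the type appearing in \eqref{BMTexamples} — more precisely it is comparable to $\dss\frac{|s|}{\ln^{\s-1}(e+|s|)}$ composed with $s=\ln_+|k|$, and one checks conditions $(\alpha)$–$(\delta)$ directly (convexity of $\varphi(t)=\widetilde\omega(e^t)$ is the only nontrivial point and follows since $t^{\s/(\s-1)}/(\ln t)^{1/(\s-1)}$ is eventually convex for $\s>1$). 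Then $\varphi_\s(\ln_+|k|)\asymp\widetilde\omega(k)$ follows from \eqref{PosledicaLambert1.5}, i.e. $W(t)\sim\ln t$, applied with $t=\ln_+|k|\to\infty$; away from infinity both sides are bounded, so the equivalence is global. For the second assertion, Lemma \ref{OsobineAsocirane} gives $T_{\t_2,\s}(k)+A\le T_{\t,\s,h}(k)\le T_{\t_1,\s}(k)+B$, and Proposition \ref{PhiSigma} bounds each $T_{\t_j,\s}(k)$ above and below by constant multiples of $\varphi_\s(\ln_+k)$ plus a constant; chaining these, $T_{\t,\s,h}(|k|)\asymp\varphi_\s(\ln_+|k|)$, which is equivalent to a weight function by the first part. (The additive constants are harmless: since $\varphi_\s(\ln_+k)\to\infty$, adding a constant changes the function only by a bounded multiplicative factor for large $k$, and near $0$ everything is bounded.)

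\textbf{Part ii).} To show ${\mathcal M}_\s=\{p^{\t p^\s}\}_{\t>0}$ and ${\mathcal N}_\s=\{e^{\frac1H\varphi_\s^*(Hp)}\}_{H>0}$ are equivalent, I would use the standard duality (see \cite{Komatsuultra1, Rainer}) between a weight sequence $M_p$ satisfying $(M.1)$ and its associated function: under $(M.1)$ one has $M_p=\sup_{k>0}k^p e^{-T(k)}$ where $T$ is the associated function, and conversely $T(k)=\sup_p\ln_+(k^p/M_p)$; moreover for a weight function $\omega$ with $\varphi(t)=\omega(e^t)$ the sequence $e^{\frac1H\varphi^*(Hp)}$ is (up to the constants in \eqref{OcenaYoung}) the weight sequence whose associated function is comparable to $H\,\omega(k^{1/H})$-type expressions — concretely, the matrix parameter $H$ in ${\mathcal N}_\s$ plays the same role as the parameter $\t$ in ${\mathcal M}_\s$. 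The clean way: by Proposition \ref{PhiSigma}, $T_{\t,\s}(k)\asymp c(\t)\varphi_\s(\ln_+k)$ with $c(\t)\asymp\t^{-1/(\s-1)}$ (the explicit constants $B_\s\t^{-1/(\s-1)}$ and $A_\s(\t/2^{\s-1})^{-1/(\s-1)}$ in \eqref{KonacnaocenaAsocirana}), so $T_{\t,\s}$ ranges, as $\t$ ranges over $(0,\infty)$, through the same scale of multiples of $\varphi_\s(\ln_+k)$ as $\frac1H\varphi_\s^*(Hp)$'s associated function does as $H$ ranges over $(0,\infty)$; by Remark \ref{RemarkEkv} we may replace $\varphi_\s$ by any equivalent weight and by \eqref{OcenaYoung} the induced change on the Young conjugate is absorbed into the matrix parameter. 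Taking $\sup_{k}k^p e^{-T_{\t,\s}(k)}$ and comparing with the defining formula of ${\mathcal N}_\s$ then yields: for each $\t$ there are $H,C$ with $p^{\t p^\s}\le C^p e^{\frac1H\varphi_\s^*(Hp)}$ and vice versa, i.e. ${\mathcal M}_\s\lesssim{\mathcal N}_\s$ and ${\mathcal N}_\s\lesssim{\mathcal M}_\s$. Finally, the identities \eqref{GlavnaTeoremaJednakost} follow because equivalent weight matrices define the same projective limit over the matrix parameter (Beurling case: ${\mathcal B}_{(\s)}={\mathcal E}^B_{(\s)}$) and the same inductive limit over the matrix parameter (Roumieu case: ${\mathcal B}_{\{\s\}}={\mathcal E}^R_{\{\s\}}$) — here one must be careful that the norm \eqref{Norma} with $h=1$, whose $\alpha$-weight is exactly $M^{\t,\s}_{|\alpha|}$, matches the norm of ${\mathcal B}_{H,\s}(K)$, whose weight is $e^{\frac1H\varphi^*_\s(H|\alpha|)}$; the matrix equivalence ${\mathcal M}_\s\approx{\mathcal N}_\s$ with constants $C^{|\alpha|}$ is precisely what allows one to absorb the discrepancy into a change of $K$ is not needed — a change of the matrix parameter and a harmless constant suffice, and this is exactly the general equivalence-of-matrices principle from \cite{Rainer}.

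\textbf{Main obstacle.} The technical heart is Part i): verifying that the proposed explicit $\widetilde\omega$ really satisfies $(\delta)$ (convexity of $\varphi$) and reconciling the behaviour near $k=0$ / small $k$, where $\ln_+$ and $W$ are not comparable, with the asymptotic equivalence at infinity — one needs the observation that since $\varphi_\s(\ln_+k)$ is bounded on any bounded $k$-interval and tends to infinity, the additive constants from Proposition \ref{PhiSigma} and Lemma \ref{OsobineAsocirane} only perturb the function by a bounded multiplicative factor, so global $\asymp$-equivalence genuinely holds. Part ii) is then largely bookkeeping with the Legendre/Young duality, but care is required to keep track of which constants are absorbed into the matrix parameter ($\t$ or $H$) versus which are genuine $C^p$-type constants, since the definition of $\lesssim$ for matrices permits the latter.
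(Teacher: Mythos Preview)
Your argument for Part~ii) is essentially the paper's: from the two-sided estimate \eqref{KonacnaocenaAsocirana} one passes, via $p^{\t p^\s}=\sup_{k}k^p e^{-T_{\t,\s}(k)}$ (log-convexity) and the Young conjugate, to the sandwich $C_2\,e^{\frac{1}{H_2}\varphi_\s^*(H_2 p)}\le p^{\t p^\s}\le C_1\,e^{\frac{1}{H_1}\varphi_\s^*(H_1 p)}$ with $H_j$ explicit in~$\t$; the space equalities then follow by norm comparison exactly as you indicate. (Note that the paper obtains constants $C_1,C_2$ rather than $C^p$, which is even stronger than what the matrix relation $\lesssim$ requires.)

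For Part~i) your route is genuinely different. You propose to exhibit the concrete candidate $\widetilde\omega(k)=(\ln_+|k|)^{\s/(\s-1)}\big/(\ln_+\ln_+|k|)^{1/(\s-1)}$, verify $(\alpha)$--$(\delta)$ directly (in particular the eventual convexity of $t^{\s/(\s-1)}(\ln t)^{-1/(\s-1)}$, which indeed holds since the derivative of its logarithm tends to $(\s/(\s-1)-1)/t>0$), and then use $W(t)\sim\ln t$ to get $\varphi_\s(\ln_+|k|)\asymp\widetilde\omega(k)$. The paper instead avoids any convexity computation: it shows that the associated function $T_{\t,\s}$ itself is a weight function by invoking \cite[Lemma~12]{Bonet}, for which it suffices to check the sequence condition \eqref{NoviUslov}; this is done in one line from Lemma~\ref{Lemamp}, which gives $m^{\t,\s}_{3p}/m^{\t,\s}_p\to\infty$. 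Then $\varphi_\s(\ln_+|k|)\asymp T_{\t,\s}(|k|)$ by Proposition~\ref{PhiSigma}, and the statement for $T_{\t,\s,h}$ follows via Lemma~\ref{OsobineAsocirane} just as you do. Your approach is more self-contained (no appeal to \cite{Bonet}) and yields the Corollary of the paper as a by-product rather than a consequence, at the cost of the convexity verification; the paper's approach is shorter and structurally cleaner, since it reduces the analytic condition $(\delta)$ to the purely arithmetic growth condition on $m^{\t,\s}_p$ already available from Lemma~\ref{Lemamp}.
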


\begin{proof}
\label{BMTrem}
{\em i)} By Proposition \ref{PhiSigma} it follows that functions $T_{\t,\s}(|k|) \asymp \varphi_{\s}(\ln_+|k|)$. Thus, it is sufficient to show that $\dss T_{\t,\s}(|k|)$ is a weight functions (see Remark \ref{RemarkEkv}).

Since $T_{\t,\s}$ is the function associated to $M^{\t,\s}_p$, by \cite[Lemma 12]{Bonet} it is sufficient to show that $m^{\t,\s}_p$ given by \ref{malomp}
satisfies \eqref{NoviUslov}, i.e., that there exists $Q\in \mathbf N$ such that
\be
\label{TrebaPokazatiTeorema}
\liminf_{p\to\infty}\frac{m^{\t,\s}_{Qp}}{m^{\t,\s}_p}>1.
\ee
Note that Lemma \ref{Lemamp} implies
\begin{multline}
\frac{m^{\t,\s}_{3p}}{m^{\t,\s}_p}\geq \frac{\Big(\frac{e}{2^\s}\Big)^{\frac{\t (3p)^{\s-1}}{2^{\s-1}}}(3p)^{\frac{\t \s (3p)^{\s-1}}{2^{\s-1}}}}{e^{\t p^{\s-1}}p^{\t \s p^{\s-1}}}\\
=\Big(\frac{3}{2}\Big)^{\t\s (\frac{3p}{2})^{\s-1}}\exp\Big\{ \t \Big(({3}/{2})^{\s-1}-1\Big) p^{\s-1} \Big\} p^{\t \s \Big(({3}/{2})^{\s-1}-1\Big) p^{\s-1} }\to \infty,\,\, p\to \infty,\nonumber
\end{multline}
and \eqref{TrebaPokazatiTeorema} follows when $Q=3$. In addition, Lemma \ref{OsobineAsocirane} together with Proposition \ref{PhiSigma}  implies that $T_{\t,\s,h}(|k|)\asymp \varphi_{\s}(\ln_+ |k|)$.

{\em ii)} Let $A=A_{\s}>0$ and $B=B_{\s}>0$ be as in \eqref{KonacnaocenaAsocirana}. For $\t>0$ set
\be
\label{Konstante}
H_1= B^{-1} \t^{\frac{1}{\s-1}},\quad H_2 =A^{-1}(\t/2^{\s-1})^{\frac{1}{\s-1}},\nonumber
\ee and note that \eqref{KonacnaocenaAsocirana} implies
$$
C_2\exp\{-\frac{1}{H_2} \varphi_{\s}(\ln_+ k)\}\leq \inf_{p\in \N}\frac{p^{\t p^{\s}}}{k^p}\leq C_1 \exp\{-\frac{1}{H_1} \varphi_{\s}(\ln_+ k)\},\quad k>0.
$$
for suitable constants $C_1,C_2>0$.

In particular, we have

\be
\label{Ocenavarphi}
C_2\exp\{\frac{1}{H_2}(H_2 \,p\ln_+ k- \varphi_{\s}(\ln_+ k)\}\leq p^{\t p^{\s}}\leq C_1 \exp\{\frac{1}{H_1}(H_1\, p\ln_+ k- \varphi_{\s}(\ln_+ k)\},
\ee
 for $p\in \N$ and $k>0$. Putting $t=\ln_+ k$ and taking the supremum with respect to $t\geq 0$, \eqref{Ocenavarphi} implies
\be
\label{OcenaNorme}
C_2 \exp\{\frac{1}{H_2}\varphi_{\s}^*(H_2\,p)\} \leq p^{\t p^{\s}}\leq C_1\exp\{\frac{1}{H_1}\varphi_{\s}^*(H_1\,p)\},\quad p\in \N.
\ee Therefore, matrices  ${\mathcal M}_{\s}$ and ${\mathcal N}_{\s}$ are equivalent.

It remains to prove \eqref{GlavnaTeoremaJednakost}.
We give the proof for the Roumieu case  $\dss {\mathcal B}_{\{\s\}}(U)=
{\mathcal E}_{\{\s\}} ^R (U)$, and omit the proof for  the Beurling case, since it uses similar arguments.

Let $\phi \in {\mathcal B}_{{\s}}(U)$.  Then for arbitrary $K\subset\subset U$ there exists $H>0$ such that $\|\phi \|_{{\mathcal B}_{H,\s}(K)}<\infty$. Putting  $\t=(2AH)^{\s-1}$, \eqref{OcenaNorme} implies
$$ \|\phi \|_{{\mathcal E}_{\t,\s}(K)}  \leq C' \|\phi \|_{{\mathcal B}_{H,\s}(K)},$$ for some $C'>0$.

Conversely, if $\phi\in {\mathcal E}_{\{\s\}} ^R (U)$ then for arbitrary $K\subset\subset U$ there exists $\t>0$ such that $\dss \|\phi \|_{{\mathcal E}_{\t,\s}(K)}<\infty$. Choosing $H=B^{-1} \t^{\frac{1}{\s-1}}$, again by \eqref{OcenaNorme} we have

$$ \|\phi \|_{{\mathcal B}_{H,\s}(K)}  \leq C''\|\phi \|_{{\mathcal E}_{\t,\s}(K)},$$ for suitable $C''>0$. This completes the proof.
\end{proof}

\begin{rem}
Note that $M^{\t,\s}_p=p^{\t p^\s}$ is not a weight sequence in the sense of \cite{Bonet}, since it does not satisfy $(M.2)'$. Instead we use $\widetilde{(M.2)'}$ in a stronger form (see Remark \ref{remM2'}).

Moreover, in the proof of Theorem \ref{GlavnaTeorema} we use the part of \cite[Lemma 12]{Bonet}
for which it is sufficient to assume $(M.1)$ and
$$(M.0)\quad\quad(\exists C>0)\quad M^{\t,\s}_p\geq C p^p   ,\quad p\in \N,$$
which is obviously true (see $\widetilde{(M.4)'}$).
\end{rem}

We conclude the paper with the following Corollary which is an immediate consequence of Theorem \ref{GlavnaTeorema}.

\begin{cor}
For $s>1$ function $\dss\omega(t)=\frac{\ln_+^s|t|}{\ln^{s-1}(\ln (e+|t|))}$, $t\not=0$, $\omega(0)=0$, is equivalent to a weight function.
\end{cor}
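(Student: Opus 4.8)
The plan is to recognize $\omega$ as asymptotically equal to the function $\varphi_{\s}(\ln_+|\cdot|)$ appearing in Theorem \ref{GlavnaTeorema}, for a suitable value of the parameter $\s>1$, and then to conclude by transitivity of $\asymp$ together with that theorem and Proposition \ref{PhiSigma}.

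First I would set $\s:=\frac{s}{s-1}$, so that $\s>1$ whenever $s>1$, and record the elementary identities $\frac{\s}{\s-1}=s$ and $\frac{1}{\s-1}=s-1$. With $\varphi_{\s}$ as in Proposition \ref{PhiSigma} this gives, for $|t|>1$,
$$\varphi_{\s}(\ln_+|t|)=\frac{(\ln_+|t|)^{\frac{\s}{\s-1}}}{W^{\frac{1}{\s-1}}(\ln_+|t|)}=\frac{\ln_+^{s}|t|}{W^{s-1}(\ln_+|t|)},$$
while for $0<|t|\leq 1$ both $\varphi_{\s}(\ln_+|t|)$ and $\omega(t)$ vanish (since $\varphi_{\s}(0)=0$ and $\ln_+|t|=0$). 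Thus $\omega$ and $\varphi_{\s}(\ln_+|\cdot|)$ share the same numerator $\ln_+^{s}|t|$, and it remains to compare the denominators $W^{s-1}(\ln_+|t|)$ and $\ln^{s-1}(\ln(e+|t|))$ as $|t|\to\infty$.

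For the comparison I would argue as $|t|\to\infty$: then $\ln_+|t|=\ln|t|\to\infty$, so \eqref{PosledicaLambert1.5} yields $W(\ln|t|)\sim\ln(\ln|t|)$, while $\ln(e+|t|)\sim\ln|t|$ gives $\ln(\ln(e+|t|))\sim\ln(\ln|t|)$; hence $W(\ln_+|t|)\sim\ln(\ln(e+|t|))$, and since $s-1>0$ the same holds after raising to the power $s-1$. Combined with the identity of the numerators and the fact that both functions vanish for $|t|\leq 1$, this shows $\omega(t)\asymp\varphi_{\s}(\ln_+|t|)$. Since, by Theorem \ref{GlavnaTeorema} {\em i)}, $\varphi_{\s}(\ln_+|\cdot|)$ is equivalent to a weight function, transitivity of $\asymp$ finishes the proof. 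There is no serious obstacle here: the statement is a direct corollary, and the only point requiring a line of argument is the asymptotic relation $W(\ln_+|t|)\asymp\ln(\ln(e+|t|))$, which is immediate from $W(x)\sim\ln x$ as $x\to\infty$; everything else is bookkeeping with the exponents $s$ and $s-1$.
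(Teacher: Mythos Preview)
Your argument is correct and follows exactly the route the paper intends: the paper states that the corollary is ``an immediate consequence of Theorem~\ref{GlavnaTeorema}'', and your identification $\sigma=s/(s-1)$ together with $W(x)\sim\ln x$ makes that immediacy explicit by showing $\omega\asymp\varphi_\sigma(\ln_+|\cdot|)$. The only bookkeeping you might add is that the equivalence $\asymp$ is understood at infinity (which is all that matters for the weight-function conditions $(\alpha)$--$(\delta)$), so the differing behaviour of $W(\ln_+|t|)$ and $\ln(\ln(e+|t|))$ near $|t|=1$ is irrelevant.
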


\section{Discussion}

The equivalence of theories of ultradifferentiable functions given by Komatsu's
or the Braun-Meise-Taylor approach are well established in the most classical situations.
Recent approach based on weighted matrices seems to offer a very general construction,
see \cite{RS, Rainer}. In parallel, it is demonstrated in  \cite{PTT-01, PTT-04, PTT-05} that the two-parameter sequences of the form  $M_p=p^{\tau p^{\sigma}}$, $\tau>0$, $\sigma>1$, provide a useful extension of the Gevrey type spaces.

In this paper we show that the projective  limits of extended Gevrey classes
can be viewed as a part of the construction based on the weighted matrices.
The same conclusion holds when the inductive limits of extended Gevrey classes
are replaced by certain slightly larger spaces.
At the same time, extended Gevrey classes  $\E_{\t,\s}(U)$ for fixed $\tau>0$ and $\sigma>1$,
can not be characterized by weight matrices used in \cite{RS, Rainer}
due to the particular role played by the parameter $\sigma$.

While finishing the paper the authors learned about the work in progress
"A comparison of two ways to generalize ultradifferentiable
classes defined by weight sequences" by J. Jim\'enez-Garrido, D. N. Nenning, and G. Schindl,
which is devoted to similar topic considered from a different point of view.
We thank the authors for their fruitful comments on the first version of this paper.


\section*{Acknowledgement}
This research was funded by Ministry of Education, Science and
Technological Development, Republic of Serbia  Projects no. 451-03-68/2022-14/200125 and {451-03-68/2022-14/200156}.
\vspace*{1cm}

\end{document}